\theoremstyle{plain}
\theoremstyle{definition}
\newtheorem{thm}{Theorem}
\newtheorem{rem}{Remark}
\begin{document}

\title[On a partially overdetermined problem in a cone]{On a partially overdetermined problem in a cone}



\author{Christos Sourdis}
\address{Department of Mathematics,  University of
	Athens.}
\email{christos.sourdis@unito.it}




\maketitle

\begin{abstract}
We prove a rigidity result for Serrin's overdetermined problem in a cone that is contained in a half-space in arbitrary dimensions. In the special case where the cone is an epigraph, this result was shown previously  in low dimensions with a different approach. \end{abstract}

\section*{Introduction and proof of the main result}

In Corollary 9 of their paper \cite{FarinaValdAMS},  Farina and Valdinoci considered the following partially overdetermined problem in the cone
\[
\Omega=\left\{x=(x',x_n)\in \mathbb{R}^n,\ n\geq 2\ :\ x_n>\alpha |x'| \right\}\ \textrm{with} \ \alpha \geq 0:
\]

 \begin{equation}\label{eqEq}
 \left\{
 \begin{array}{c}
   \Delta u+f(u)=0,\ u>0\ \textrm{in}\ \Omega; \\
    \\
   u=0,\ \partial_\nu u=c\ \textrm{on}\ \partial \Omega \setminus \{0\},
 \end{array}
 \right.
 \end{equation}
 with
 \begin{equation}\label{eqRegU}
 u\in C^2(\Omega)\cap C^1\left(\bar{\Omega}\setminus \{ 0\} \right)\cap W^{1,\infty}(\Omega),
 \end{equation}
where $\nu$ denotes the exterior unitary normal vector on $\partial \Omega \setminus \{ 0\}$, $c\in \mathbb{R}$ is any fixed constant, and $f\in C^1(\mathbb{R})$.
 It is shown therein that \emph{ $\alpha=0$, provided that $n\leq 3$}.
 It is worth mentioning that the above problem was initially studied  in \cite{FVRevist}, motivated by a question of  Vazquez.
Their result represents an extension of the famous \emph{Serrin's problem} \cite{Serrin} in unbounded epigraphs, in the spirit of \cite{BCN}. We point out that the characterization 'partially overdetermined' comes from the fact that the overdetermined boundary conditions are not prescribed in the entire $\partial \Omega$.

In this short note, using a completely different approach, we prove a generalization of this result in any dimension.
In fact, as will be apparent, our arguments go through with a bit weaker regularity assumptions on $f$. Moreover, $W^{1,\infty}_{loc}(\Omega)$ in (\ref{eqRegU}) suffices for our purposes (see also (\ref{eqbelow}) below). Our approach is greatly motivated from the study of the regularity properties of free boundaries in one-phase  and obstacle-type problems, and hinges on the fact that $\lambda \Omega \equiv \Omega$ for any cone $\Omega$ with vertex at the origin and $\lambda>0$.
\begin{thm}
\emph{Let $\Omega$
be an open cone in $\mathbb{R}^n$, $n\geq 2$, that is contained in the half-space $\{x_n> 0\}$ and such that $\partial \Omega \setminus \{0\}$  has $C^{1,\beta}$ regularity for some $\beta>0$.
Moreover, let $c,\ f$ be as above and let $u$ satisfy (\ref{eqEq}) and (\ref{eqRegU}). Then, the cone $\Omega$ coincides with the half-space  $\{x_n> 0\}$.}\end{thm}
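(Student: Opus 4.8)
The plan is to exploit the only structural hypothesis on the domain --- that $\lambda\Omega=\Omega$ for every $\lambda>0$ --- through a blow-up of $u$ at the vertex of the cone, in the spirit of free boundary regularity theory: the case $c\neq0$ plays the role of the one-phase (Bernoulli) problem, and $c=0$ that of the obstacle problem. For $\lambda>0$ set $u_\lambda(x):=\lambda^{-1}u(\lambda x)$. Since $\lambda\Omega=\Omega$ and the exterior normal of a cone is dilation invariant, $u_\lambda$ is defined on $\Omega$ and satisfies $\Delta u_\lambda+\lambda f(\lambda u_\lambda)=0$ and $u_\lambda>0$ in $\Omega$, with $u_\lambda=0$ and $\partial_\nu u_\lambda=c$ on $\partial\Omega\setminus\{0\}$. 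By $(\ref{eqRegU})$, $u$ is Lipschitz on $\Omega$ and $u(0)=0$ (as $u$ vanishes on $\partial\Omega\setminus\{0\}$, which accumulates at the origin), so $0\le u(x)\le L|x|$ with $L=\mathrm{Lip}(u)$; hence $\{u_\lambda\}$ is uniformly Lipschitz and $\lambda f(\lambda u_\lambda)\to0$ locally uniformly as $\lambda\to0$. Interior and boundary $W^{2,p}$ estimates (here the $C^{1,\beta}$ regularity of $\partial\Omega\setminus\{0\}$ enters) together with Arzel\`a--Ascoli yield, along a sequence $\lambda_j\to0$, that $u_{\lambda_j}\to u_0$ in $C^{1,\alpha}_{loc}(\overline\Omega\setminus\{0\})$, where $u_0$ is harmonic in $\Omega$, $0\le u_0(x)\le L|x|$, and $u_0=0$, $\partial_\nu u_0=c$ on $\partial\Omega\setminus\{0\}$. (Moving inwards from a boundary point also shows $c\le0$.)

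Assume first $c<0$. Since $\partial_\nu u_0=c\neq0$, we have $u_0\not\equiv0$, hence $u_0>0$ in $\Omega$ by the strong maximum principle. Writing $\Omega=\{r\omega:r>0,\ \omega\in D\}$ with $D:=\Omega\cap S^{n-1}$, a positive harmonic function on $\Omega$ that vanishes continuously on $\partial\Omega\setminus\{0\}$ and stays bounded near the vertex must be of the form $u_0=a\,r^{\gamma_1}\phi_1(\theta)$, where $\phi_1>0$ is the first Dirichlet eigenfunction of $-\Delta_{S^{n-1}}$ on $D$, $\mu_1(D)$ its eigenvalue, $\gamma_1>0$ the positive root of $\gamma(\gamma+n-2)=\mu_1(D)$, and $a>0$; this is classical --- separate variables and use that all higher spherical modes change sign, or invoke the boundary Harnack principle on the Lipschitz cone. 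The bound $u_0(x)\le L|x|$ then forces $a\,r^{\gamma_1-1}\le L/\max_D\phi_1$ for all $r>0$, hence $\gamma_1\le1$ and therefore $\mu_1(D)=\gamma_1(\gamma_1+n-2)\le n-1$. On the other hand $\Omega\subseteq\{x_n>0\}$ gives $D\subseteq D_+:=\{x_n>0\}\cap S^{n-1}$, and $\mu_1(D_+)=n-1$ (eigenfunction $x_n|_{S^{n-1}}$), so by domain monotonicity $\mu_1(D)\ge n-1$. Thus $\mu_1(D)=n-1=\mu_1(D_+)$, and since $\partial D$ is $C^{1,\beta}$, strict domain monotonicity of the first Dirichlet eigenvalue forces $D=D_+$, i.e. $\Omega=\{x_n>0\}$. (Alternatively, $\partial_\nu u_0(r\omega)=a\,r^{\gamma_1-1}\partial_{\nu_D}\phi_1(\omega)\equiv c\neq0$ already gives $\gamma_1=1$.)

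Assume now $c=0$. Then $f(0)<0$: if $f(0)>0$ then $\Delta u<0$ near $\partial\Omega$, while if $f(0)=0$ then $\Delta u+Lu\ge0$ near $\partial\Omega$ (as $|f(u)|\le Lu$ there), and in either case Hopf's lemma would force $\partial_\nu u<0$, contradicting $c=0$. Since $u_0\equiv0$ is now possible, we blow up to second order: $v_\lambda(x):=\lambda^{-2}u(\lambda x)$ satisfies $\Delta v_\lambda=-f(u(\lambda\,\cdot\,))\to-f(0)=:\kappa>0$ and $v_\lambda=0$, $\partial_\nu v_\lambda=\lambda^{-1}c=0$ on $\partial\Omega\setminus\{0\}$. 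Granting the quadratic growth estimate $u(x)\le C|x|^2$ near the vertex --- the analogue of the optimal $C^{1,1}$-regularity for the obstacle problem, obtained by a Caffarelli-type iteration that starts from the fact that the first-order blow-up above is identically zero --- the family $\{v_\lambda\}$ is locally bounded in $W^{2,p}$, and along a sequence $\lambda_j\to0$ we get $v_{\lambda_j}\to v_0$ in $C^{1,\alpha}_{loc}(\overline\Omega\setminus\{0\})$ with $\Delta v_0=\kappa$ in $\Omega$, $v_0\ge0$, $v_0=|\nabla v_0|=0$ on $\partial\Omega\setminus\{0\}$, and $v_0\not\equiv0$ (since $\kappa\neq0$). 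Extending $v_0$ by zero across $\partial\Omega$ --- which adds no singular term to the Laplacian, the normal derivative being continuous there --- produces a global solution $V_0$ of the obstacle problem $\Delta V_0=\kappa\,\chi_{\{V_0>0\}}$ with quadratic growth, whose coincidence set $\mathbb{R}^n\setminus\Omega$ contains $\{x_n\le0\}$. Moreover $V_0$ is homogeneous of degree $2$: the problem $\Delta v=\kappa$ in $\Omega$, $v=|\nabla v|=0$ on $\partial\Omega\setminus\{0\}$, $v\ge0$, $v=O(|x|^2)$ has at most one solution, for the difference of two such extends --- first across $\partial\Omega$ (matching value and gradient), then across the vertex (removable singularity, being $O(|x|^2)$) --- to an entire harmonic function of quadratic growth that vanishes on the open set $\{x_n<0\}$, hence vanishes identically; applying this to $v_0$ and to its dilations $\lambda^{-2}v_0(\lambda\,\cdot\,)$ yields the homogeneity. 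Since the coincidence set of $V_0$ has Lebesgue density at least $1/2$ at the free boundary point $0$, that point is regular, so $V_0$ --- being its own blow-up at $0$ --- is a half-space solution $V_0(x)=\tfrac{\kappa}{2}((x\cdot e)_+)^2$ for some unit vector $e$; then $\{x_n\le0\}\subseteq\{x\cdot e\le0\}$ forces $e=e_n$, whence $\{x_n>0\}=\{V_0>0\}\subseteq\Omega\subseteq\{x_n>0\}$, i.e. $\Omega=\{x_n>0\}$.

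The two substantive ingredients are the growth bounds at the vertex --- immediate ($|u|\le L|x|$) from $(\ref{eqRegU})$ in the case $c<0$, but requiring the obstacle-type iteration ($|u|\le C|x|^2$) in the case $c=0$ --- and, in the degenerate case, the rigidity of the global obstacle solution $V_0$, which rests on the homogeneity of the blow-up and on the dichotomy between regular and singular free boundary points. The case $c<0$ is comparatively soft once the blow-up is in hand: it reduces to the variational fact that the hemisphere minimizes, among its smooth subdomains, the first Dirichlet eigenvalue on the sphere. I expect the second-order growth estimate and the justification of the homogeneity of $v_0$ (equivalently, a Weiss-type monotonicity formula for the perturbed obstacle problem) to be the points demanding the most care.
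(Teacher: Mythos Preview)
Your treatment of the case $c<0$ is essentially the paper's: the same first-order blow-up, the same identification of the limit as a positive homogeneous harmonic function on the cone, and the same eigenvalue comparison $\mu_1(D)=n-1=\mu_1(D_+)$ forcing $D=D_+$. The paper reads off $\gamma=1$ directly from the Neumann condition (your parenthetical alternative) and phrases the conclusion via a Faber--Krahn--type measure identity rather than strict domain monotonicity, but these are cosmetic differences.

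For $c=0$ both proofs begin identically --- Hopf gives $f(0)<0$, one extends $u$ by zero to obtain a weak solution of an obstacle-type equation, invokes the $C^{1,1}$ regularity at the vertex (the paper cites Petrosyan--Shahgholian--Uraltseva for this, which is exactly the quadratic bound you need), and passes to a second-order blow-up $u_0$ solving $\Delta u_0=-f(0)$ in $\Omega$ with $u_0=|\nabla u_0|=0$ on $\partial\Omega$. From here the arguments diverge. You extend $u_0$ to a global solution of the obstacle problem, establish its $2$-homogeneity by a uniqueness/Liouville argument, and then appeal to the classification of homogeneous global obstacle solutions (half-space versus singular), using the density $\ge 1/2$ of the coincidence set to rule out the singular case. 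The paper instead differentiates: for each direction $e$, $v=\partial_e u_0$ is harmonic in $\Omega$, vanishes on $\partial\Omega$, and is globally Lipschitz; if $\Omega\subsetneq\{x_n>0\}$ then Lemma~2.1 of Berestycki--Caffarelli--Nirenberg forces $v\equiv 0$, and since $e$ is arbitrary $u_0$ would be constant, contradicting $\Delta u_0=-f(0)\neq 0$.

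Both routes are correct. The paper's is shorter and lighter --- it reduces everything to a single linear-growth Liouville lemma in cones --- while yours is more in the spirit of free-boundary classification and would generalize more readily to settings where a BCN-type lemma is unavailable. Your uniqueness argument for the homogeneity is clean and avoids a Weiss monotonicity formula; the only point to be slightly careful about is that the extended difference is harmonic across $\partial\Omega\setminus\{0\}$ and has quadratic growth both at the origin and at infinity, which you have.
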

\begin{proof}
We first consider the case
\[
c\neq 0\ \textrm{i.e.}\ c<0.
\]

Motivated from the study of one-phase free boundary problems \cite{alt}, for $r>0$ small, we consider the following \emph{blow-up} of $u$:
\[
u_r(y)=\frac{u(ry)}{r},\ \ y\in \Omega.
\]
We readily find that
\[
 \left\{
 \begin{array}{c}
   \Delta u_r+rf(ru_r)=0,\ u_r>0\ \textrm{in}\ \Omega; \\
    \\
   u_r=0,\ \partial_\nu u_r=c\ \textrm{on}\ \partial \Omega \setminus \{0\}.
 \end{array}
 \right.
 \]

 By virtue of (\ref{eqRegU}), which implies that
  \begin{equation}\label{eqbelow}
  u(x)\leq C|x| \ \textrm{for some}\ C>0\ \textrm{near the origin (see also \cite[Thm. 4.1]{heino})},
  \end{equation}
  and standard elliptic estimates \cite[Ch. 9-10]{jost} (interior $W^{2,p}$ estimates and boundary $C^{1,\beta}$ Schauder estimates, keeping in mind that the cone becomes flatter and flatter at infinity), along a sequence $r_j\to 0$, $u_{r_j}$ converges in $C^1_{loc}\left(\bar{\Omega}\setminus \{ 0\}\right)$
 to some blow-up limit $u_0\in C^2(\Omega)\cap C\left(\bar{\Omega}\right)\cap C^1\left(\bar{\Omega}\setminus \{ 0\} \right)$ which solves
\begin{equation}\label{eqEqBU}
 \left\{
 \begin{array}{l}
   \Delta u_0 =0,\ u_0>0\ \textrm{in}\ \Omega; \\
    \\
   u_0=0,\ \partial_\nu u_0=c\ \textrm{on}\ \partial \Omega \setminus \{0\}.
 \end{array}
 \right.
 \end{equation} We point out that we got a nontrivial limit $u_0$   because of the assumption that $c<0$.

By a result of \cite{ancona}, all positive harmonic functions in $\Omega$ that vanish on $\partial \Omega$ must be homogeneous, i.e,
\[
u_0(y)=|y|^\gamma \Phi_0\left(\frac{y}{|y|} \right),
\]
for some $\gamma>0$ and $\Phi_0 \in C\left(\overline{\mathbb{S}^{n-1}\cap \Omega} \right)\cap C^2\left(\mathbb{S}^{n-1}\cap \Omega \right)$ which vanishes on
$\mathbb{S}^{n-1}\cap \partial \Omega$. Observe that, since $\nabla u_0$ is a homogeneous function of degree $\gamma-1$, in order for the overdetermined boundary conditions in (\ref{eqEqBU}) to be satisfied, we must have
\[
\gamma=1.
\]

So,  $\Phi_0$ is a positive eigenfunction with Dirichlet boundary conditions to the Laplace-Beltrami operator $-\Delta_{\mathbb{S}^{n-1}}$ on $\mathbb{S}^{n-1}\cap \Omega$, corresponding to the eigenvalue $n-1$ (see for instance the proof of \cite[Lem. 2.1]{BCN}). Hence, recalling that $n-1$ is the principal Dirichlet eigenvalue of $-\Delta_{\mathbb{S}^{n-1}}$ on the upper half-sphere $\mathbb{S}^{n-1}_+$, we deduce that \[\left|\mathbb{S}^{n-1}\cap \Omega \right|=\left|\mathbb{S}^{n-1}_+\right|=\frac{1}{2}\left|\mathbb{S}^{n-1}\right|.\]
The above relation, however, is only possible if $\Omega$ coincides with the half-space  $\{x_n> 0\}$ as desired.

It remains to consider the case
\[
c=0.
\]

Firstly, by Hopf's boundary point lemma (at some point on $\partial \Omega \setminus \{0\}$ which belongs to the boundary of a  ball contained  in $\Omega$), we deduce that
\begin{equation}\label{eqf}
f(0)<0.
\end{equation}

It follows readily from (\ref{eqEq}) and (\ref{eqRegU}) that $u$ (extended trivially outside of $\Omega$) satisfies, in the weak sense, the following problem:
\begin{equation}\label{heavy}
\Delta u=-H(u)f(u)\ \ \textrm{in}\ \mathbb{R}^n,
\end{equation}
where $H$ stands for the usual Heaviside function. We just point out that near the origin one uses that
\[
\lim_{\rho \to 0}\int_{B_\rho(0)}^{}\nabla u \nabla \varphi dx=0,
\]
for any $\varphi \in C_0^\infty(\mathbb{R}^n)$, which holds since $u\in W^{1,\infty}$ (see also \cite[Thm. 1.3]{cabre1} for a related argument).

Using (\ref{eqf}) and the assumed regularity on $f$, $u$, it follows from \cite[Ch. 2]{petro} that
\begin{equation}\label{eqC1,1}
u\in C^{1,1}\left(B_\delta(0)\right)\ \ \textrm{for some small}\ \delta>0,
\end{equation}
(see also \cite{shagho} for a more general approach).

This time, as in the study of free boundary problems of obstacle type \cite{petro}, for $r>0$ small, we consider the following {blow-up} of $u$:
\[
u_r(y)=\frac{u(ry)}{r^2},\ \ y\in \Omega.
\]
We readily find that
\[
 \left\{
 \begin{array}{c}
   \Delta u_r+f(r^2u_r)=0,\ u_r>0\ \textrm{in}\ \Omega; \\
    \\
   u_r=0,\ \partial_\nu u_r=0\ \textrm{on}\ \partial \Omega \setminus \{0\}.
 \end{array}
 \right.
 \]
By virtue of (\ref{eqC1,1}) and standard elliptic estimates, along a sequence $r_j\to 0$, $u_{r_j}$ converges in $C^1_{loc}(\mathbb{R}^n)$
 to some blow-up limit $u_0\in C^2(\Omega)\cap C^{1,1}\left(\mathbb{R}^n\right)$ (globally) which satisfies
\begin{equation}\label{eqEqBU}
 \left\{
 \begin{array}{l}
   \Delta u_0 =-f(0),\ u_0>0\ \textrm{in}\ \Omega; \\
    \\
   u_0=0,\ \partial_\nu u_0=0\ \textrm{on}\ \partial \Omega \setminus \{0\}.
 \end{array}
 \right.
 \end{equation}
 We point out that we got a nontrivial limit $u_0$   because of (\ref{eqf}).

Let $e$ be an arbitrary direction in $\mathbb{R}^n$ and let
\[
v=\partial_e u_0.
\]
Then, the function $v$ is harmonic in the cone $\Omega$, vanishes on its boundary, and is globally Lipschitz continuous.
To conclude, let us suppose, to the contrary, that the cone $\Omega$ does not coincide with the half-space $\{x_n>0 \}$.
Then, since $v$ has at most linear growth, we get from Lemma 2.1 in \cite{BCN} and the comments after it that
\[
v\equiv 0.
\]
On the other hand, recalling that the direction $e$ is arbitrary, this  contradicts the fact that $u_0$ is nontrivial.

The proof of the theorem is complete.
\end{proof}

\begin{rem}
\emph{The initial value problem
\[
\ddot{u}=-f(u);\ u(0)=0,\ \dot{u}(0)=c, \
\]
has a unique local solution $U$  (not necessarily positive) with maximal interval of existence $[0,T)$, $T\leq +\infty$. By Hopf's boundary lemma, applied to the difference $u(x)-U(x_n)$, we infer that $u$ coincides with $U$ for $0\leq x_n<T$. In particular, the partial derivatives $\partial_{x_i}u$, $i<n$ are identically zero in this strip, and thus in the entire half-space $\{x_n>0 \}$ (by the unique continuation principle applied in the linear equation that each one satisfies). Consequently, $u$ depends only on $x_n$.}
\end{rem}

\begin{rem}
\emph{It is well known that, in any dimension $n\geq 3$, there exists an $\alpha_n>0$ such that the cone described by $|x_n|<\alpha_n |x'|$ supports a one-homogenous solution to (\ref{eqEq}) with $f \equiv 0$, see \cite{alt}.}
\end{rem}

\end{document}